\newcommand{\tto}{\dashrightarrow}
\newcommand{\by}{\xrightarrow}
\newcommand{\inj}{\hookrightarrow}
\newcommand{\surj}{\rightarrow\!\!\!\!\!\rightarrow}
\newcommand{\Surj}{\relbar\joinrel\surj}
\newcommand{\car}{\operatorname{char}}
\newcommand{\can}{{\operatorname{can}}}
\newcommand{\corank}{\operatorname{corank}}
\newcommand{\et}{{\operatorname{\acute{e}t}}}
\newcommand{\Q}{\mathbb{Q}}
\newcommand{\Z}{\mathbb{Z}}
\newtheorem{thm}{Theorem}
\newtheorem{lemma}{Lemma}
\newtheorem{conj}{Conjecture}
\theoremstyle{remark}
\newtheorem{rk}{Remark}
\begin{document}
\title{On the universal regular homomorphism in codimension $2$}
\author{Bruno Kahn}
\address{IMJ-PRG\\Case 247\\
4 place Jussieu\\
75252 Paris Cedex 05\\France}
\email{bruno.kahn@imj-prg.fr}
\subjclass[2010]{14C25, 14K30}
\date{May 18, 2020}
\begin{abstract}
We point out a gap in Murre's proof of the existence of a universal regular homomorphism for codimension $2$ cycles on a smooth projective variety, and offer two arguments to fill this gap.
\end{abstract}
\maketitle

%\enlargethispage*{20pt}

In \cite{sitges}, Jacob Murre shows the existence of a universal regular homomorphism for algebraically trivial cycles of codimension $2$ on a smooth projective variety over an algebraically closed field. This theorem has been largely used in the literature, most lately in \cite{achteretal}, \cite{colliot-pirutka} and \cite{benoist-witt}; for example, it is essential in \cite{benoist-witt} for descending the method of Clemens and Griffiths \cite{c-g} to non-algebraically closed fields, thus allowing Benoist and Wittenberg to obtain new examples of geometrically rational nonrational $3$-folds. 

Unfortunately its proof contains a gap, but fortunately this gap can be filled, actually by two different methods. This is the purpose of this note, which is a slight modification of a letter to Murre on December 5, 2018.\\

%Dear Jacob,

%Here are the details I promised on this small gap I found in \cite{sitges}.

Recall the set-up, with the notation of \cite{sitges}: $V$ is a smooth projective variety over an algebraically closed field $k$ and $A^n(V)$ denotes the group of codimension $n$ cycles algebraically equivalent to $0$ on $V$, modulo rational equivalence. Following Samuel, given an abelian $k$-variety $A$, a homomorphism
\[\phi:A^n(V)\to A(k) \]
is said to be \emph{regular} if, for any pointed smooth projective $k$-variety $(T,t_0)$ and any correspondence $Z\in CH^n(T\times V)$, the composition
\begin{equation} \label{eq1}
T(k)\by{w_Z}A^n(V)\by{\phi} A(k)
\end{equation}
is induced by a morphism $f:T\to A$; here $w_Z$ is the composition
\begin{equation}\label{eq0}
T(k)\to A_0(T)\by{Z_*} A^n(V)
\end{equation}
where the first map sends $t$ to $[t]-[t_0]$. (Note that $f$ is then unique, by Zariski density of the rational points in $T$.)

Using fancy language, regular homomorphisms from $A^n(V)$ form a category and a \emph{universal regular homomorphism} is an initial object of this category, if it exists. This initial object is well-known to exist when $n=0$, $n=1$ (the Picard variety) and $n=\dim X$ (the Albanese variety). Murre's theorem is:

\begin{thm}[\protect{\cite[Th. 1.9]{sitges}}] A universal regular homomorphism $\phi_0$  exists when $n=2$ for any $V$ (of dimension $\ge 2$).
\end{thm}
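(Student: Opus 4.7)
The plan is the classical ``maximum'' strategy: among all surjective regular homomorphisms $\phi:A^2(V)\to A(k)$ I would find one whose target has maximal dimension and then prove it is universal.

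\textbf{Step 1 (Dimension bound).} I would first establish that $\dim A$ is uniformly bounded over all surjective regular homomorphisms $\phi:A^2(V)\to A(k)$. For any pair $(T,Z)$, the composition $\phi\circ w_Z:T(k)\to A(k)$ is induced by a morphism $T\to A$ which, being a morphism from a pointed variety to an abelian variety, factors through $\mathrm{Alb}(T)\to A$. Using the fact that the images of the various $w_Z$ generate $A^2(V)$, the corresponding images in $A$ generate it as an abelian subvariety. A Bloch--Srinivas-type argument reduces the situation to a single ``sufficiently ample'' pair $(T_0,Z_0)$ --- for instance one where $T_0$ is obtained from Hilbert schemes parametrising codimension $2$ cycles on $V$ --- whose Albanese surjects onto $A$ for every $\phi$. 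This yields a uniform bound $\dim A\le \dim\mathrm{Alb}(T_0)$.

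\textbf{Step 2 (Maximal element).} By Step 1, choose a surjective regular homomorphism $\phi_0:A^2(V)\to A_0(k)$ with $\dim A_0$ maximal.

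\textbf{Step 3 (Universality).} Given another regular homomorphism $\psi:A^2(V)\to B(k)$, consider the diagonal map $(\phi_0,\psi):A^2(V)\to (A_0\times B)(k)$. After verifying that this is again a regular homomorphism, its image should be contained in an abelian subvariety $C\subseteq A_0\times B$ which is itself the target of a surjective regular homomorphism; by maximality of $\dim A_0$, the first projection $\mathrm{pr}_1:C\to A_0$ must be surjective with $\dim\ker=0$, hence an isogeny. A divisibility argument on $A^2(V)$ (its image must lie in the divisible hull of $\mathrm{pr}_1^{-1}(\phi_0(A^2(V)))$) then forces $\ker(\mathrm{pr}_1)$ to be trivial, so $\mathrm{pr}_1$ is an isomorphism and $\psi=\mathrm{pr}_2\circ\mathrm{pr}_1^{-1}\circ\phi_0$ is the required factorization.

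\textbf{Main obstacle.} The delicate point --- and the place I would expect Murre's argument to require reinforcement --- is Step 3: showing rigorously that $(\phi_0,\psi)$ is itself a regular homomorphism with image an \emph{algebraic} subgroup of $A_0\times B$, rather than merely a subgroup of $k$-points. In other words, one needs the general principle that the product of two regular homomorphisms is regular, together with the fact that the image ``fits into'' an abelian subvariety in a way compatible with the maps induced from all pairs $(T,Z)$. Making this precise requires a careful study of how the morphisms $f_{T,Z}$ behave as $(T,Z)$ varies in families, and in particular a comparison of the Albanese morphisms attached to $\phi_0$ and to $\psi$ on common parameter spaces.
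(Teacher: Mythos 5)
Your skeleton (bound $\dim A$ over all surjective regular homomorphisms, pick a maximal one, then prove universality) is indeed the frame of Murre's argument: Steps 2 and 3 correspond to his Lemma 1.6.2 i) and Prop.\ 2.1, and the points you single out in Step 3 as the ``main obstacle'' (that the image of a regular homomorphism is the group of points of an abelian subvariety, and that products of regular homomorphisms behave well) are handled there by comparatively formal arguments. The genuine difficulty sits in your Step 1, and there your argument has a real gap: nothing justifies the existence of a single pair $(T_0,Z_0)$, independent of $\phi$, whose Albanese surjects onto $A$ for \emph{every} surjective regular $\phi$. Codimension $2$ cycles on $V$ are parametrised by infinitely many components of Hilbert or Chow schemes, with no a priori bound on the relevant Albanese varieties, and a Bloch--Srinivas decomposition of the diagonal is available only under strong hypotheses (e.g.\ $CH_0$ supported in low dimension), not for an arbitrary $V$. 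Worse, your Step 1 uses nothing special about codimension $2$: if it worked as stated it would yield a universal regular homomorphism in every codimension $n$, whereas this is expected to fail already for $n=3$ (see the Conjecture at the end of this note).

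What makes $n=2$ special, and what the actual proof uses, is the Merkurjev--Suslin theorem: one bounds $\dim A$ through torsion, via $2\dim A=\corank A(k)\{l\}$ and $\corank A^2(V)\{l\}\le\corank H^3_\et(V,\Q_l/\Z_l(2))=b_3(V)$ for $l\ne\car k$, so that it suffices to show that $\phi$ restricts to a surjection $A^2(V)\{l\}\Surj A(k)\{l\}$. Even this last point is delicate --- and is precisely the gap in Murre's original proof that this note addresses: the parametrising map $w_Z\colon B(k)\to A^2(V)$ is only a set-theoretic map, not a homomorphism, so the surjection $B\{l\}\surj A\{l\}$ coming from the induced homomorphism of abelian varieties does not obviously factor through $A^2(V)\{l\}$. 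Filling it requires either constructing $(B,Z)$ so that $w_Z$ is a homomorphism (via Jacobians of curves and the universal divisor, method (A)), or proving that $w_Z$ sends torsion to torsion (via Bloch's theorem on Pontrjagin products and Lemma \ref{l1}, method (B)). None of these ingredients appears in your proposal, so Step 1 as written does not go through.
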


Recall the main steps of his proof. First, given a regular homomorphism $\phi$, its image in $A(k)$ is given by the points of some sub-abelian variety $A'\subseteq A$ \cite[Lemma 1.6.2 i)]{sitges}. From this, one deduces \cite[Prop. 2.1]{sitges} that $\phi_0$ exists if and only if $\dim A$ is bounded when $\phi$ runs through the \emph{surjective} regular homomorphisms. Now, Murre's key idea is to bound  $\dim A$ by the torsion of $A^2(V)$, which is controlled by the Merkurjev-Suslin theorem (Bloch's observation).

Let us elaborate a little on this point, to avoid the $l$-adic argument of \emph{loc. cit.}: it suffices to prove that $\phi$ induces a surjection
\begin{equation}\label{eq2}
A^2(V)\{l\} \Surj A(k)\{l\}
\end{equation}
for some prime $l\ne \car k$, where $M\{l\}$ denotes the $l$-primary torsion of an abelian group $M$: indeed, $\corank A(k)\{l\} = 2\dim A$. Mainly by Merkurjev-Suslin (Diagram in \cite[Prop. 6.1]{sitges})\footnote{One could replace this diagram by the injection $CH^2(V)\inj H^4_\et(V,\Gamma(2))$ of \cite[Th. 2.13 (c)]{licht}, together with the surjection $H^3_\et(V,\Q_l/\Z_l(2))\surj H^4_\et(V,\Gamma(2))\{l\}$, cf. loc. cit., proof of Th. 2.15; here, $\Gamma(2)$ is Lichtenbaum's complex.},  
\[\corank CH^2(V)\{l\} \le \corank H^3_\et(V,\Q_l/\Z_l(2)) (= b_3(V))\]
so the same holds \emph{a fortiori} for $\corank A^2(V)\{l\}$. 

Now, in \cite[Lemma 1.6.2 ii)]{sitges}, Murre constructs an abelian variety $B$ (pointed at $0$) and a correspondence $Z\in CH^2(B\times V)$ such that \eqref{eq1} is surjective for $T=B$. Since this map is induced by a morphism of abelian varieties sending $0$ to $0$ (hence a homomorphism), it restricts to a surjection 
\begin{equation}\label{eq3}
B\{l\}\surj A\{l\}.
\end{equation}

This allows me to explain
\begin{center}
the gap:
\end{center}

A priori \eqref{eq3} does not imply \eqref{eq2}, because $w_Z$ is in general only a set-theoretic map, not a group homomorphism (see e.g. \cite[Th. (3.1) a)]{bloch}).

We now fix a surjective regular homomorphism $\phi$ as above. We shall give two ways to fill this gap:

\enlargethispage*{20pt}

\begin{enumerate}
\item[(A)] construct $(B,Z)$ such that $w_Z$ is a homomorphism;
\item[(B)] prove that $w_Z$ always sends torsion to torsion.
\end{enumerate}

(A) was my initial idea, and (B) was inspired by a discussion with Murre.

\subsection*{Explanation of (A)} We have
%Take $(T,t_0,Z)$ with $T$ of dimension $1$. From \eqref{eq1} and \eqref{eq0}, we obtain a homomorphism
%\begin{equation}\label{eq4} 
%\phi\circ Z_*:A_0(T)=J(k)\to A(k)
%\end{equation}
%where $J=J(T)$ is the Jacobian of $T$.

%\begin{lemma}\label{p1} The homomorphism \eqref{eq4} is of the form $\phi\circ w_\alpha$ for some correspondence $\alpha\in CH^2(J\times V)$.
%\end{lemma}

\begin{lemma}\label{p1} Take $(T,t_0,z)$ with $T$ of dimension $1$ and $z\in CH^2(T\times V)$. Let $J=J(T)$ be the jacobian of $T$. Then the homomorphism $z_*:A_0(T)=J(k)\to A^2(V)$ is of the form $w_\alpha$ for some correspondence $\alpha\in CH^2(J\times V)$ (using $0\in J(k)$ as base point).
\end{lemma}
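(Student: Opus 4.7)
The plan is to build $\alpha$ as a composition of two correspondences, using the Poincar\'e line bundle on $J\times T$. Since $T$ is a smooth projective curve with a rational point $t_0$, the Jacobian $J=\operatorname{Pic}^0(T)$ carries a Poincar\'e bundle $\mathcal{P}$ on $J\times T$, which I normalize so that $\mathcal{P}|_{J\times\{t_0\}}\cong\mathcal{O}_J$. Set $\Theta:=c_1(\mathcal{P})\in CH^1(J\times T)$. By the defining property of $\mathcal{P}$, for every $j\in J(k)$ the class $\Theta|_{\{j\}\times T}\in\operatorname{Pic}(T)$ is the degree-zero divisor class corresponding to $j$, while $\Theta|_{\{0\}\times T}=0$; thus the correspondence action
\[
\Theta_*\colon J(k)\longrightarrow A_0(T),\qquad j\longmapsto \Theta|_{\{j\}\times T}-\Theta|_{\{0\}\times T}
\]
is nothing but the inverse of the Abel-Jacobi isomorphism.

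Next, I define
\[
\alpha\;:=\;z\circ\Theta\;=\;(p_{JV})_{*}\bigl(p_{JT}^{*}\Theta\cdot p_{TV}^{*}z\bigr)\in CH^2(J\times V),
\]
with $p_{JV}$, $p_{JT}$, $p_{TV}$ the projections from $J\times T\times V$; the codimension is $1+2-\dim T=2$, as required.

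It remains to verify $w_\alpha=z_*$ under the identification $A_0(T)=J(k)$. Flat base change on the cartesian square with vertices $J\times T\times V$, $J\times T$, $T\times V$, $T$, combined with the projection formula, gives the standard identity $(z\circ\Theta)_*=z_*\circ\Theta_*$ on zero-cycles. Applied to $[j]-[0]\in A_0(J)$, this yields
\[
w_\alpha(j)=\alpha_*([j]-[0])=z_*\bigl(\Theta_*([j]-[0])\bigr),
\]
which by the previous paragraph equals $z_*$ evaluated at the element of $A_0(T)$ corresponding to $j$. No serious obstacle arises: the Poincar\'e bundle exists because $t_0$ provides the necessary rigidification, and the remaining work is only the bookkeeping required to justify $(z\circ\Theta)_*=z_*\circ\Theta_*$ on cycles, not merely modulo some adequate equivalence.
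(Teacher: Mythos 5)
Your argument is correct and fills in all the points that matter: $\alpha:=z\circ\Theta$ lies in $CH^2(J\times V)$, the identity $(z\circ\Theta)_*=z_*\circ\Theta_*$ is the usual compatibility of composition of correspondences with their action on Chow groups, and $\Theta_*([j]-[0])$ is precisely the degree-zero divisor class corresponding to $j$, i.e.\ the inverse of the identification $A_0(T)=J(k)$, so $w_\alpha=z_*$. Your route differs from the paper's at exactly one point: the construction of the divisor correspondence in $CH^1(J\times T)$. The paper takes the universal relative Cartier divisor $D_\can$ on $T\times T^{(g)}$, composes with the (graph of the) birational inverse of $(t_1,\dots,t_g)\mapsto\sum t_i-gt_0$, and then must identify $D_*\colon A_0(J)\to A_0(T)$ with the Albanese morphism to obtain the normalization; you instead take $\Theta=c_1(\mathcal{P})$ for the Poincar\'e bundle on $J\times T$, for which the fibrewise normalization is immediate from the universal property of $\operatorname{Pic}^0_{T/k}$ --- no symmetric power, no birational indeterminacy, no Albanese identification. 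The trade-off is that you invoke the existence of the Poincar\'e bundle, which in the reference \cite{milne} cited by the paper is itself produced from the same universal divisor on $T\times T^{(g)}$; so the underlying geometry is the same, but your packaging makes the step the paper dispatches with ``one checks immediately'' genuinely immediate. Two minor remarks: the rigidification $\mathcal{P}|_{J\times\{t_0\}}\cong\mathcal{O}_J$ is never actually used in your computation (only $\mathcal{P}|_{\{0\}\times T}\cong\mathcal{O}_T$, which is part of the universal property); and it is worth saying explicitly, as you implicitly do, that $\alpha_*$ carries $A_0(J)$ into $A^2(V)$ because correspondences preserve algebraic equivalence.
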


\begin{proof} Let $g$ be the genus of $T$. Recall from \cite[Ex. 3.12]{milne} the universal relative Cartier divisor $D_\can$ on $T\times T^{(g)}/T^{(g)}$, parametrising the effective divisors of degree $g$ on $T$. It defines a correspondence $D_\can:T^{(g)}\to T$. %, such that the composition of correspondences
%\[T\by{\theta} T^{(g)}\by{D_\can} T\]
%equals $\Delta_T +(g-1)[t_0]$, where $\theta(t) = (t,t_0,\dots,t_0)$. Here $[t_0]$ denotes the class of the composition $T\to \Spec k\by{t_0} T$. 
Composing with the graph of the birational map $J\tto T^{(g)}$ inverse to $(t_1,\dots,t_g)\mapsto \sum t_i -gt_0$, we find a (Chow) correspondence $D:J\to T$. %such that the composition
%\[T\by{\theta'} J\by{D'} T\]
%equals $\Delta_T +(g-1)[t_0]$, where $\theta'(t)=t-t_0$. 
I claim that $\alpha= z\circ D$ answers the question. Indeed, one checks immediately that the homomorphism
\[D_*:A_0(J)\to A_0(T)\]
is the Albanese morphism for $J$; hence the composition
\[J(k)\to A_0(J)\by{D_*} A_0(T)\]
is the identity.
\end{proof}

\begin{rk}\label{r1} On the other hand, the morphism $T\to A$ given by the regularity of $\phi$ factors through a homomorphism
\begin{equation}\label{eq5} J(T)\to A.
\end{equation}
 This homomorphism coincides with the one underlying $\phi\circ z_*$ in view of Lemma \ref{p1}. Indeed, by uniqueness, it suffices to see that \eqref{eq5} induces $\phi\circ z_*$ on $k$-points; this is clear since $T(k)$ generates $J(T)(k)$ as an abelian group.
\end{rk}

Consider all triples $(T,t_0,z)$ with $\dim T=1$. The homomorphism $\bigoplus A_0(T)\allowbreak\by{(z_*)} A^2(V)$ is surjective, hence so is $\bigoplus A_0(T)\by{(z_*)} A^2(V){\surj} A(k)$. As in Remark \ref{r1}, each summand of this homomorphism is induced by a homomorphism $\rho_{T,t_0,z}:J(T)\to A$, so 
\[B:=\prod_{(T,t_0,z)\in S} J(T)\by{(\rho_{T,t_0,z})} A\]
is surjective (faithfully flat) for a suitable finite set $S$. For each $(T,t_0,z)$, let $\alpha=\alpha_z$ be a correspondence given by Lemma \ref{p1}.  Write $\pi_{T,t_0,z}:B\to J(T)$ for the canonical projection, viewed as an algebraic correspondence. The pair given by $B$ and  $Z=\sum_{(T,t_0,z)} \alpha_z\circ \pi_{T,t_0,z}$ yields (A).

\subsection*{Explanation of (B)} It suffices to show that the map
\[f:B(k)\to A_0(B)\]
sends $l$-primary torsion to $l$-primary torsion. Let $d=\dim B$. By Bloch's theorem \cite[Th. (0.1)]{bloch}, we have $A_0(B)^{*(d+1)}=0$, where $*$ denotes Pontrjagin product. In other words, $f$ has ``degree $\le d$'' in the sense that its $(d+1)$-st deviation \cite[\S 8]{eil-ml} is identically $0$. It remains to show:

\begin{lemma}\label{l1} Let $f:M\to N$ be a map of degree $\le d$ between two abelian groups, such that $f(0)=0$. Let $m_0\in M$ be an element such that $am_0=0$ for some integer $a>0$. Then
\[a^{\binom{d+1}{2}} f(m_0)=0.\]
\end{lemma}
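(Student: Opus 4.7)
The plan is to induct on $d$. The base case $d=0$ is trivial: a map of degree $\le 0$ with $f(0)=0$ vanishes identically, so $f(m_0)=0$ and $a^{\binom{1}{2}}=1$ indeed annihilates it.

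For the inductive step, I would introduce the first-difference $g(x):=f(x+m_0)-f(x)$ and its centered version $h(x):=g(x)-g(0)=f(x+m_0)-f(x)-f(m_0)$. The key technical input is that $g$ has degree $\le d-1$: this follows from the identity $g_n(x_1,\dots,x_n)=f_{n+1}(m_0,x_1,\dots,x_n)$, which is a direct rearrangement of the Eilenberg--Mac Lane deviation formula $f_n(x_1,\dots,x_n)=\sum_{S\subseteq\{1,\dots,n\}}(-1)^{n-|S|}f(\sum_{i\in S}x_i)$, obtained by splitting the sum over the $(n+1)$-subsets according to whether they contain the distinguished index. Subtracting the constant $g(0)$ leaves the deviations of order $\ge 1$ untouched (since $\sum_S(-1)^{n-|S|}=0$ for $n\ge 1$), so $h$ still has degree $\le d-1$, and now $h(0)=0$ puts it squarely in the range of the inductive hypothesis.

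The conclusion then comes from telescoping from $0$ to $am_0=0$: using $g(km_0)=h(km_0)+f(m_0)$ and $h(0)=0$,
\[0=f(am_0)-f(0)=\sum_{k=0}^{a-1}g(km_0)=a\,f(m_0)+\sum_{k=1}^{a-1}h(km_0).\]
Since $a\cdot(km_0)=k(am_0)=0$, the inductive hypothesis gives $a^{\binom{d}{2}}h(km_0)=0$ for every $k$; multiplying the displayed identity by $a^{\binom{d}{2}}$ yields $a^{\binom{d}{2}+1}f(m_0)=0$, and the estimate $\binom{d}{2}+1\le\binom{d+1}{2}$ (valid for $d\ge 1$, with gap $d-1$) finishes the induction. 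The only delicate step is the degree-drop for $g$; everything after that is bookkeeping. In fact the same recursion $e(d)\le e(d-1)+1$ with $e(0)=0$ produces the sharper bound $a^d f(m_0)=0$, so the stated $a^{\binom{d+1}{2}}$ is not tight but is more than enough for the $l$-primary application to $f:B(k)\to A_0(B)$.
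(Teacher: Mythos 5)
Your proof is correct, but it follows a genuinely different route from the one in the paper. The paper's induction compares $f$ with its rescaling: since the $d$-th deviation of $f$ is multilinear, the map $g_a(m)=f(am)-a^df(m)$ has degree $\le d-1$, and evaluating at $m_0$ (where $f(am_0)=f(0)=0$) gives the recursion $e(d)=e(d-1)+d$, whence the exponent $\binom{d+1}{2}$. You instead use the difference operator in the direction of $m_0$ itself: the identity $g_n(x_1,\dots,x_n)=f_{n+1}(m_0,x_1,\dots,x_n)$ (which you verify correctly, as is the remark that subtracting the constant $g(0)$ does not affect deviations of order $\ge 1$) shows that $h(x)=f(x+m_0)-f(x)-f(m_0)$ has degree $\le d-1$ with $h(0)=0$, and telescoping along $0,m_0,2m_0,\dots,am_0=0$ converts the problem for $f$ at $m_0$ into the problem for $h$ at the $a$-torsion points $km_0$. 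The bookkeeping is right ($\binom{d}{2}+1\le\binom{d+1}{2}$ for $d\ge1$), so the stated bound follows, and your recursion $e(d)\le e(d-1)+1$ indeed yields the sharper conclusion $a^{d}f(m_0)=0$, which is stronger than what the paper records; for the application (torsion goes to torsion, hence surjectivity on $l$-primary parts) either exponent suffices, and the paper's version has the minor advantage of needing only the single algebraic input that the top deviation is multilinear, whereas yours needs the (equally standard, and correctly proved) compatibility of deviations with the difference operator.
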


\begin{proof} Induction on $d$. The case $d=1$ is trivial. Assume $d>1$. By hypothesis, the $d$-th deviation of $f$ is multilinear, which implies that the map
\[g_a(m) =f(am) - a^df(m)\]
is of degree $\le d-1$. By induction, $a^{\binom{d}{2}} g_a(m_0)=0$, hence the conclusion.
\end{proof}

\begin{rk} Of course, either argument proves more generally the following: the map $\phi:A^n(V)\{l\} \to A(k)\{l\}$ is surjective  for any integer $n$, any surjective regular homomorphism $\phi:A^n(V)\to A(k)$ and any prime $l\ne \car k$. 
\end{rk}

\begin{rk} In \cite[\S 6, Lemma and Prop. 11]{beauville}, Beauville gives a different proof that $f$ sends torsion to torsion. Moreover, he observes that Ro\v\i tman's theorem \cite{roitman}  then implies that the restriction of $f$ to torsion \emph{is actually an isomorphism, hence a homomorphism}. 

If we apply Ro\v\i tman's theorem together with Lemma \ref{l1}, we obtain the following stronger result: \emph{if $m,m_0\in B(k)$ and $m_0$ is torsion, then $f(m+m_0)=f(m)+f(m_0)$}. (Fixing $m$, the map $f_m:m'\mapsto f(m+m')- f(m) - f(m')$ is of degree $< d$, hence $a^{\binom{d}{2}}f_m(m_0) = 0$ if $am_0= 0$ by Lemma \ref{l1}, and therefore $f_m(m_0)=0$ by  Ro\v\i tman's theorem.)
\end{rk}
%\enlargethispage*{20pt}
\subsection*{Some expectation} The landmark work of Bloch and Esnault \cite{be} yields the existence of $4$-folds $V$ over fields $k$ of characteristic $0$ such that the $l$-torsion of $A^3(V)$ is infinite  (hence its $l$-primary torsion has infinite corank). One example, used by Rosenschon-Srinivas \cite{rs} and Totaro \cite{totaro} and  relying on Nori's theorem \cite{nori}  and Schoen's results \cite{schoen}, is the following: start from the generic abelian $3$-fold $A$, whose field of constants $k_0$ is finitely generated over $\Q$; choose an elliptic curve $E/k_0(t)$, not isotrivial with respect to $k_0$, and take $V=A_{k_0(t)}\times E$, $k=$ algebraic closure of $k_0(t)$. %I would venture:

\begin{conj} For this $V$, a universal regular homomorphism on $A^3(V)$ does not exist.
\end{conj}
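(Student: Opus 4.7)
The plan is to argue by contradiction. Suppose a universal regular homomorphism $\phi_0\colon A^3(V)\to A_0(k)$ exists. By its universal property every surjective regular $\phi\colon A^3(V)\to A(k)$ factors as $\psi\circ\phi_0$ with $\psi$ a surjection of abelian varieties, so $\dim A\le \dim A_0$. It therefore suffices to exhibit surjective regular homomorphisms $\phi_N\colon A^3(V)\to A_N(k)$ with $\dim A_N\to\infty$.

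By the final remark of the paper, each such $\phi$ induces a surjection $A^3(V)\{l\}\Surj A(k)\{l\}$, and $\corank A(k)\{l\}=2\dim A$. Since the Nori--Schoen--Bloch--Esnault construction gives $\corank A^3(V)\{l\}=\infty$ in the present example, the $l$-adic estimate is compatible with unbounded $\dim A$ but does not realise it; the construction of the sequence $\phi_N$ must be genuinely geometric. Note that the proof of Lemma \ref{p1} applies verbatim to codimension $3$, so every triple $(T,t_0,z)$ with $T$ a smooth projective curve and $z\in CH^3(T\times V)$ yields a correspondence $\alpha\in CH^3(J(T)\times V)$ with $w_\alpha=z_*$ a group homomorphism $J(T)(k)\to A^3(V)$. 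Combining such data over a suitable finite set of triples, as in the explanation of (A), one produces pairs $(B,Z)$ with $B$ a product of Jacobians and $w_Z$ a group homomorphism whose image in $A^3(V)$ contains any prescribed finite family of algebraically trivial cycles.

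The heart of the proof is to convert this flexibility into a sequence of surjective regular homomorphisms of unbounded dimension. One concrete approach: given $N$ independent Nori--Schoen $l$-primary torsion classes $\gamma_1,\dots,\gamma_N\in A^3(V)\{l\}$, construct $(B,Z)$ so that the $\gamma_i$ lie in the image of $w_Z$, take $A$ to be the quotient of $B$ by the kernel of $\phi\circ w_Z$ for a carefully chosen regular homomorphism $\phi$, and check that the surjection $A^3(V)\{l\}\Surj A(k)\{l\}$ detects enough of the $\gamma_i$ to force $\dim A\ge N/2$. The main obstacle is in producing the regular homomorphism $\phi$ itself: while one has an abundant supply of source abelian varieties $B\to A^3(V)$ via Lemma \ref{p1}, turning these into abelian quotients $A$ of $A^3(V)$ through which a regular $\phi$ factors is exactly the codimension $3$ analogue of Murre's problem, and it is here that the infinite corank of $A^3(V)\{l\}$ must be leveraged. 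I expect the argument to require a Hodge-theoretic or $l$-adic monodromy analysis of the non-isotrivial family $A\times E$ over $k_0(t)$ to show that no single finite-dimensional abelian variety can absorb all such regular images. This is the delicate core of the statement, and presumably the reason it is only offered as a conjecture.
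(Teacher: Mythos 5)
The statement you are trying to prove is stated in the paper as a \emph{conjecture}: the paper offers no proof of it, and the only evidence given is that the mechanism bounding $\dim A$ in codimension $2$ (finiteness of $\corank A^2(V)\{l\}$ via Merkurjev--Suslin) breaks down for this $V$, since $\corank A^3(V)\{l\}=\infty$. Your proposal does not close this gap; it is a strategy sketch whose essential step is missing, as you yourself concede at the end (``I expect the argument to require\dots''). Concretely: your reduction is fine (if $\phi_0$ existed, every surjective regular $\phi:A^3(V)\to A(k)$ would have $\dim A\le\dim A_0$, so it suffices to produce surjective regular homomorphisms of unbounded target dimension), but you never construct even one such homomorphism with large target. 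The tools you invoke run in the wrong direction: Lemma \ref{p1} and construction (A) produce group homomorphisms \emph{from} abelian varieties (Jacobians, or products $B$ of Jacobians) \emph{into} $A^3(V)$; these are used in the paper to analyse a regular homomorphism that is already given, not to manufacture abelian-variety quotients $A$ of $A^3(V)$ together with a regular $\phi$. Producing such quotients is, as you note, exactly the codimension-$3$ analogue of Murre's existence problem, and no amount of torsion in $A^3(V)$ by itself supplies them.

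There is also a quantitative error in the ``concrete approach'': the surjectivity of $\phi$ on $l$-primary torsion yields $2\dim A=\corank A(k)\{l\}\le\corank A^3(V)\{l\}$, i.e.\ an \emph{upper} bound on $\dim A$ in terms of the torsion of $A^3(V)$ (vacuous here, since the right-hand side is infinite); it can never force $\dim A\ge N/2$. To get a lower bound you would need $\phi$ to be \emph{injective} on the span of the chosen torsion classes $\gamma_1,\dots,\gamma_N$, and nothing in your construction --- nor in the paper --- gives any control on the kernel of a regular homomorphism restricted to torsion in codimension $3$. (In codimension $2$ such control comes from Merkurjev--Suslin, which is precisely what fails here.) So the heart of the matter --- showing that no finite-dimensional abelian variety can receive all surjective regular homomorphisms from $A^3(V)$, presumably via the monodromy of the non-isotrivial family over $k_0(t)$ --- is untouched, which is why the statement remains a conjecture rather than a theorem.
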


\subsection*{Ackowledgements}  I am indebted to Jacob Murre for discussions around this problem, and for his encouragement to publish this note.
 I am also indebted to the referee for a careful reading, pointing out an incorrect earlier formulation of Lemma \ref{p1}, as well as the referece to \cite{beauville}. (The referee credits in turn Charles Vial for this reference.)

\end{document}